\newtheorem{theorem}{Theorem}[section]
\newtheorem{lemma}[theorem]{Lemma}
\newtheorem{remark}[theorem]{Remark}
\newtheorem{example}[theorem]{Example}
\newtheorem{proposition}[theorem]{Proposition}
\newtheorem{definition}[theorem]{Definition}
\newtheorem{corollary}[theorem]{Corollary}
\newtheorem{fig}[theorem]{Figure}
\newcommand{\bthe}{\begin{theorem}}
\newcommand{\ethe}{\end{theorem}}
\newcommand{\ben}{\begin{enumerate}}
\newcommand{\een}{\end{enumerate}}
\newcommand{\bit}{\begin{itemize}}
\newcommand{\eit}{\end{itemize}}
\newcommand{\beq}{\begin{equation}}
\newcommand{\eeq}{\end{equation}}
\newcommand{\ble}{\begin{lemma}}
\newcommand{\ele}{\end{lemma}}
\newcommand{\bde}{\begin{definition}\rm}
\newcommand{\ede}{\halmos\end{definition}}
\newcommand{\bco}{\begin{corollary}}
\newcommand{\eco}{\end{corollary}}
\newcommand{\bpr}{\begin{proposition}}
\newcommand{\epr}{\end{proposition}}
\newcommand{\brem}{\begin{remark}\rm}
\newcommand{\erem}{\end{remark}}
\newcommand{\bproof}{\begin{proof}}
\newcommand{\eproof}{\end{proof}}
\newcommand{\bexam}{\begin{example}\rm}
\newcommand{\eexam}{\end{example}}
\newcommand{\bfi}{\begin{fig}}
\newcommand{\efi}{\end{fig}}
\newcommand{\btab}{\begin{tab}}
\newcommand{\etab}{\end{tab}}
\newcommand{\beao}{\begin{eqnarray*}}
\newcommand{\eeao}{\end{eqnarray*}\noindent}
\newcommand{\beam}{\begin{eqnarray}}
\newcommand{\eeam}{\end{eqnarray}\noindent}
\newcommand{\barr}{\begin{array}}
\newcommand{\earr}{\end{array}}
\newcommand{\bdis}{\begin{displaymath}}
\newcommand{\edis}{\end{displaymath}\noindent}
\newcommand*\colvec[3][]{
    \begin{pmatrix}\ifx\relax#1\relax\else#1\\\fi#2\\#3\end{pmatrix}
}
\def\RR{{\mathbb R}}
\newcommand{\NN}{\mathbb{N}}
\def\calf{{\mathcal{F}}}
\newcommand{\bbn}{\mathbb{N}}
\newcommand{\bbr}{\mathbb{R}}
\newcommand{\al}{{\alpha}}
\newcommand{\ga}{{\gamma}}
\newcommand{\si}{{\sigma}}
\newcommand{\vp}{\varphi}
\newcommand{\eps}{\varepsilon}
\newcommand{\var}{\text{var}}
\newcommand{\cov}{\text{cov}}
\newcommand{\halmos}{\quad\hfill\mbox{$\Box$}}  
\numberwithin{equation}{section}  
\begin{document}

\title{Asymmetric COGARCH processes} 

\author{Anita Behme\thanks{Center for Mathematical Sciences, Technische Universit\"at M\"unchen,  85748 Garching, Boltzmannstrasse 3, Germany, e-mail: behme@ma.tum.de and cklu@ma.tum.de},
Claudia Kl\"uppelberg$^\ast$\hspace{-5pt}, and
Kathrin Mayr$^\ast$
}

\maketitle

\begin{abstract}
Financial data are as a rule asymmetric, although most econometric models are symmetric.
This applies also to continuous-time models for high-frequency and irregularly spaced data. 
We discuss some asymmetric versions of the continuous-time GARCH model, concentrating then on the GJR-COGARCH.
We calculate higher order moments and extend the first jump approximation.
These results are prerequisites for moment estimation and pseudo maximum likelihood estimation of the GJR-COGARCH parameters, respectively, which we derive in detail.
\end{abstract}

{\em AMS 2010 Subject Classifications:} primary:\,\,\, 60G10; 60G51; 62M05\\ \,\,\,
secondary: \,\,\,62F10; 62M10; 90G70\\

\noindent
{\em Keywords:} APCOGARCH, Asymmetric Power COGARCH; COGARCH; Conti\-nuous-time GARCH; First-Jump Approximation; GJR-GARCH; GJR-COGARCH; High-frequency Data; Maximum-Likelihood Estimation; Method of Moments; Stochastic Volatility

\section{Introduction}\label{s1}

In 1982 Engle \cite{engle82} suggested an autoregressive conditionally heteroskedastic (ARCH) model for the variance of the United Kingdom inflation. 
In this model the conditional variance was modeled as an autoregressive process of past variances. 
Bollerslev~\cite{bollerslev} enriched this model by an additional term of past squared observations resulting in the generalized ARCH (GARCH) model, which is nowadays one of the most prominent econometric models as it captures relevant stylized facts of econometric data. 
It has the form
\begin{align}
Y_n &= \sigma_n\varepsilon_n\,,\quad
\sigma_n^2 = \theta + \sum_{i=1}^{q}{\alpha_{i} Y_{n-i}^2} + \sum_{j=1}^{p}{\beta_{j} \sigma_{n-j}^2},\quad n\in\NN,
\end{align}
for i.i.d. random variables $(\varepsilon_n)_{n\in\bbn}$ with $E\left[\varepsilon_{n}\right]=0$, $\var(\varepsilon_{n})=1$ and $\eps_n$ independent of $\calf_{n-1}$, the sigma algebra generated by $\{Y_{k}: k\le n-1\}$.  
The parameters satisfy $\theta>0$, $\alpha_{i}\geq0$, $\beta_{i}\geq0$ with $\al_q,\beta_p>0$.

However, in real data, there is an asymmetric response of the volatility, called the leverage effect, which says that stock returns are negatively correlated with changes in return volatility. 
More precisely, volatility tends to rise in response to bad news and to fall in response to good news. 
This effect has also been investigated in empirical studies, which show the statistical significance of asymmetry in financial time series models (cf. \cite{dingetal,glostenGJR,hentschel,McKM,PenzerWangYao,rabezakoian}).

As a consequence of their empirical findings, Ding, Granger and Engle~\cite{dingetal} introduced an Asymmetric Power GARCH (APGARCH) model defined as
\begin{align}\label{eq:APARCH1}
Y_n &= \varepsilon_n \sigma_n\,,\quad 
\sigma_n^\delta = \theta + \sum_{i=1}^{q}{\alpha_{i} h(Y_{n-i})} + \sum_{j=1}^{p}{\beta_{j} \sigma_{n-j}^\delta}, \quad n\in\NN, 
\end{align}
for i.i.d. random variables $(\varepsilon_n)_{n\in\bbn}$ with $E\left[\varepsilon_{n}\right]=0$, $\var(\varepsilon_{n})=1$, and $\eps_n$ independent of $\calf_{n-1}$.
The asymmetry is introduced by $h(x)=\left(\left| x \right|- \gamma x\right)^\delta$ with $\delta>0$ and $\left|\gamma \right|<1$;
moreover, $\theta >0$, $\alpha_{i}\geq0$, $\beta_{i}\geq0$  with $\al_q,\beta_p>0$.

It is shown in \cite{dingetal} that the APGARCH model includes several important ARCH and GARCH models as special cases. In particular, if $\delta=2$ the model includes Engle's ARCH($p$) \cite{engle82}, Bollerslev's GARCH($p,q$) \cite{bollerslev} and the GJR model, named after Glosten, Jagannathan and Runkle~\cite{glostenGJR}, while for $\delta=1$ the Threshold GARCH (TARCH) model (\cite{rabezakoian,zakoian}) can be obtained; see \cite{franqzakoian} for further information on GARCH-type models.

With the advent of high-frequency data and irregularly spaced tick-by-tick data, continuous-time models came into the focus of econometrics. Nelson \cite{nelson} derived a continuous-time GARCH model by a diffusion approximation, which yields continuous prices and volatilities driven by Brownian motions.
Consequently, Nelson's GARCH diffusion model cannot model jumps in prices and volatilities. 
However, it retains the heavy (Pareto) tails of the original GARCH model.

At the beginning of the new millennium empirical studies established stylized facts of high-frequency data, giving the important insight that prizes and volatilities exhibit jumps, including common jumps (cf. the excellent monographs \cite{JP12} and \cite{AitSJ} for insight and further references). 
In 2004 Kl\"uppelberg, Lindner and Maller~\cite{KLM:2004} suggested a continuous-time GARCH(1,1) (COGARCH(1,1)) model capturing the jump features of high-frequency data, and they proved properties like strict stationarity and second order behaviour. 
Moment estimation for this COGARCH model works very well for high-frequency data as demonstrated in \cite{zappetal}, where also a simple leverage term has been added. 
Maller et al.~\cite{mallermuellerszimayer} derived a first jump approximation, which provides a sequence of GARCH models converging to the original COGARCH process in probability in the Skorokhod topology.
This allows for the use of existing software for maximum likelihood estimation for GARCH processes and is also applicable to non-equidistantly sampled data.

In this paper we will discuss an asymmetric  COGARCH(1,1) model, which takes care of the observed leverage effect in a systematic way. 
The model is a continuous-time version of \eqref{eq:APARCH1}  with $\delta=2$. 
We define the GJR-COGARCH in Section~\ref{s2}, derive first properties and present some simulation. 
Sections~\ref{s3} and \ref{s4} contain the estimation methods as well as their prerequisites. In particular, in Section~\ref{s3} we calculate the moments of the asymmetric model and apply these to obtain explicit moment estimators.
In Section~\ref{s4} we extend the first jump approximation from \cite{mallermuellerszimayer} to the asymmetric model and prove its convergence. This is then used to derive a pseudo maximum likelihood estimator for the parameters of the GJR-COGARCH.

\section{The GJR-COGARCH}\label{s2}

Recall the GJR-GARCH(1,1) which is defined  as
\begin{align} \label{eq:GJR}
Y_n&:=\sigma_n \varepsilon_n\,,\quad \sigma_n^{2}=\theta + \alpha (|Y_{n-1}|-\gamma Y_{n-1})^2 + \beta \sigma_{n-1}^{2}, \quad n\in\NN, 
\end{align}
for $\theta\geq 0$, $\alpha, \beta>0$, $|\gamma|< 1$ and an i.i.d. noise sequence $(\varepsilon_n)_{n\in\NN}$ with $E[\varepsilon_0]=0$ and $\var(\varepsilon_0)=1$.
Following the construction method of the COGARCH(1,1) in \cite{KLM:2004} and using a reparametrization of the parameters by defining $\eta=-\log\beta$ and $\vp=\alpha/\beta$, a continuous-time GJR-GARCH (GJR-COGARCH) can be defined as follows (cf. \cite{lee10}):
\beam
d G_{t} &=& \sigma_{t} d L_{t}, \quad t\geq 0, \quad G_{0}=0,\label{eq:defCOGJRint}\\
\sigma_{t}^2 &=& \sigma_{0}^2+\theta t -\eta \int_{0}^{t} \sigma_{s}^{2}ds + \vp \sum_{0<s\leq t}{\sigma_{s}^2 h(\Delta L_{s})} , \quad t\geq 0,\quad \sigma_{0}^2\ge 0,\label{eq:defCOGJR}
\eeam
where $h(x)=(|x|-\gamma x)^2$ with $\left| \gamma \right|<1$, and $\theta,\eta,\vp > 0$.
The L\'evy process $L$ has L\'evy measure $\nu_L\neq 0$, independent of $\sigma_0^2$. 
We choose $L$ symmetric so that the asymmetry of the model originates in $\gamma$ only. In particular, throughout this paper we will use $E[L_1]=0$ and $E[L_1^2]=1$.
Note that for a symmetric L\'evy process the sign of the chosen parameter $\gamma$ becomes irrelevant for the resulting process as positive and negative jumps of the same size appear with the same probability. Hence we will assume from now on that $\gamma \in [0,1)$.

\brem
Asymmetry of a COGARCH can, of course, also be achieved by choosing an asymmetric L\'evy process as driving process in the original symmetric COGARCH. 
Replacing in \eqref{eq:defCOGJR} the term $h(\Delta L_s)$ for $L$ with symmetric L\'evy measure $\nu_L$ by $\Delta L_s^2$ with asymmetric L\'evy measure $$\nu_a(dx)=\nu_L(dx) ((1-\ga)\mathds{1}_{\{x\ge0\}}+ (1+\ga)\mathds{1}_{\{x<0\}})$$ yields the same model.
However, we prefer to have the asymmetry as a model parameter which we can estimate by standard statistical procedures.
\erem

The following lemma summarizes some properties of the GJR-COGARCH volatility which we will need later on. Analogous properties of the COGARCH can be found in \cite[Lemma 4.1]{KLM:2004} and \cite[Prop. 3.2]{behmelindnermaller}.

\ble
(a) \, The asymmetric GJR-COGARCH volatility $\left(\sigma_{t}^2\right)_{t\geq0}$ is a generalized Ornstein-Uhlenbeck process with representation
\begin{equation} \label{eq:COGJR}
\sigma_{t}^2=\left(\theta\int_{0}^{t}{e^{X_{s-}}}ds + \sigma_{0}^2\right) e^{-X_{t}}, \quad t\geq 0,
\end{equation}
where $X$ in \eqref{eq:COGJR} is a spectrally negative L\'evy process defined as
\begin{equation}
X_{t} = t\eta - \sum_{0<s\leq t}{\log\left(1+\vp h(\Delta L_{s})\right)}, \quad t\geq 0, \label{eq:coAPARCHX}
\end{equation}
whose Laplace exponent $\Psi(u)=E[e^{-uX_1}]$, $u\geq 0$, is given by
\begin{equation}
\Psi(u)=-\eta u + \int_{\bbr}{\left(\left(1+\vp h(y)\right)^{u}-1\right)\nu_{L}(dy)}, \label{eq:142}
\end{equation}
and it is finite for $u>0$ if and only if $E[L^{2u}]<\infty$.\\[2mm]
(b) \, 
 Provided the quantities are finite, the following identities hold:
\begin{align} \label{eq:psi1}
 \Psi(1)&=-\eta + \vp (1+\gamma^2) \int_{\bbr} y^2 \nu_{L}(dy)= -\eta + \vp (1+\gamma^2) E[L_1^2]\quad \mbox{and} \\
\Psi(2)&= 2 \Psi(1) + \vp^2(1+6\gamma^2+\gamma^4) \int_{\bbr} y^4 \nu_{L}(dy). \label{eq:psi2}
\end{align}
(c) \,  The process $(\sigma_{t}^2)_{t\geq 0}$ is the unique solution of the stochastic differential equation
\begin{equation} \label{eq:SDEcoAPARCH}
d\sigma_{t}^2= \theta dt + \sigma_{t-}^2 dU_t, \quad t> 0,
\end{equation}
with driving L\'evy process
$$U_t=-X_t +\sum_{0<s\leq t} (e^{-\Delta X_s}-1+\Delta X_s)= -\eta t + \vp \sum_{0<s\leq t} h(\Delta L_{s}).$$
\ele

We shall work with the stationary solution of the GJR-COGARCH volatility, whose existence is guaranteed under certain conditions as given in the following proposition.

\bpr
(a) \, The GJR-COGARCH volatility \eqref{eq:COGJR} has a stationary distribution if and only if the integral $\int_{0}^{\infty}{e^{-X_{t-}}}dt$ converges a.s. to a finite random variable. 
This is the case if and only if $E[L_1^2]<\infty$ and
\begin{equation}
\int_{\bbr}{\log \left(1+\vp h(y)\right)\nu_{L}(dy)}<\eta. \label{eq:143}
\end{equation}
(b) \, The stationary distribution of the GJR-COGARCH volatility is uniquely determined by the law of
\begin{equation} \label{eq:coGJRstat}
 \sigma^2_\infty:=\theta \int_{0}^{\infty}{e^{-X_{t-}}}dt.
\end{equation}
(c) \,  Equation \eqref{eq:143} holds if $\Psi(1)\leq 0$. 
Moreover, in this case also the corresponding symmetric COGARCH volatility  (i.e. with $\gamma=0$) has a stationary distribution.
\epr

\bproof
We use the generalized OU representation of $(\sigma_t^2)_{t\geq 0}$ as given in \eqref{eq:COGJR} and \eqref{eq:SDEcoAPARCH}.
(a)  is a consequence of  \cite[Thm. 3.1]{lee10} or \cite[Theorem 2.b]{KLM:2006}, and (b) of \cite[Thm. 2.1]{behmelindnermaller}.
(c) holds true as $\log(1+x)<x$ for positive $x$.  The second part holds, since \eqref{eq:psi1} implies that $\Psi(1)>-\eta +\vp E[L_1]$.
Now apply  \cite[Thm. 2.b]{KLM:2006}.
\eproof

In Figure \ref{fig:simulation}, we depict a simulation of COGARCH and GJR-COGARCH processes, both driven by the same compound Poisson process with rate 1 and standard normal jumps.
Although the sample paths of the symmetric and asymmetric COGARCH in the first row of Figure~\ref{fig:simulation} look similar, the returns in the second row already exhibit more pronounced downwards and less pronounced upwards peaks.
This is due to the volatility process depicted in the third row, where the asymmetry in the jumps has rather dramatic consequences.

\begin{figure}[ht] 
\vspace*{-3mm}
\begin{center}
$$
\barr{cc}
\includegraphics[width = 6cm]{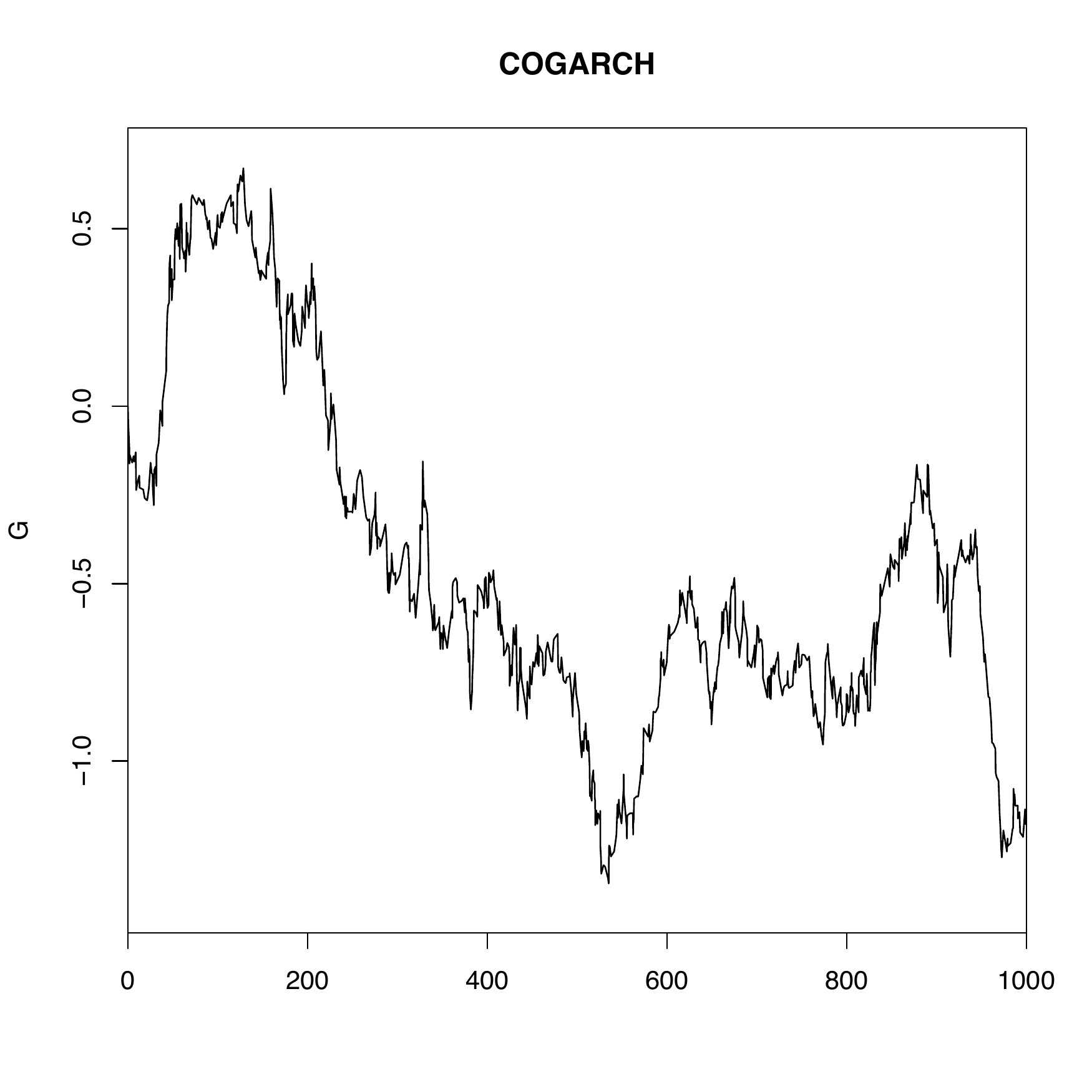} &
\includegraphics[width = 6cm]{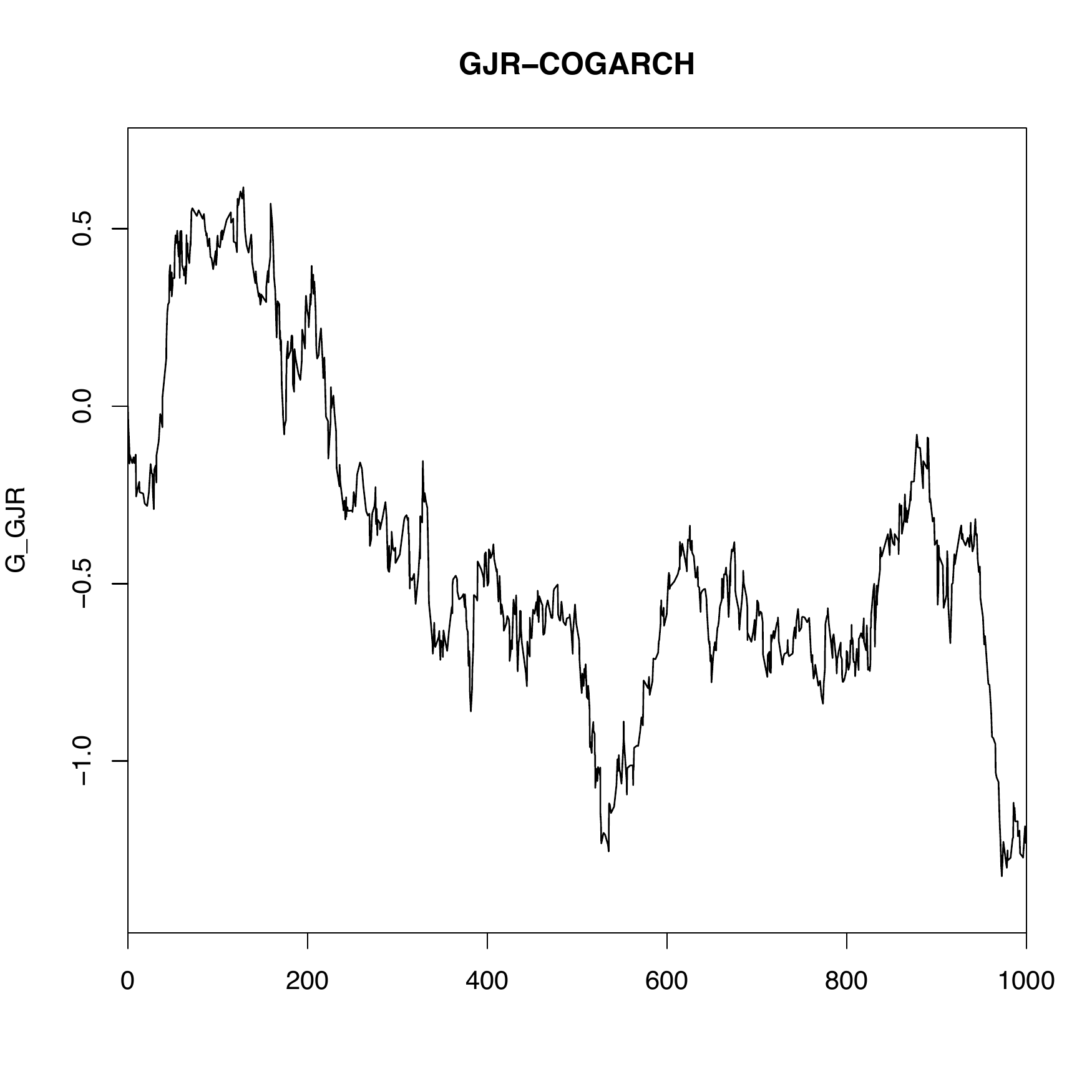}\\
\includegraphics[width = 6cm]{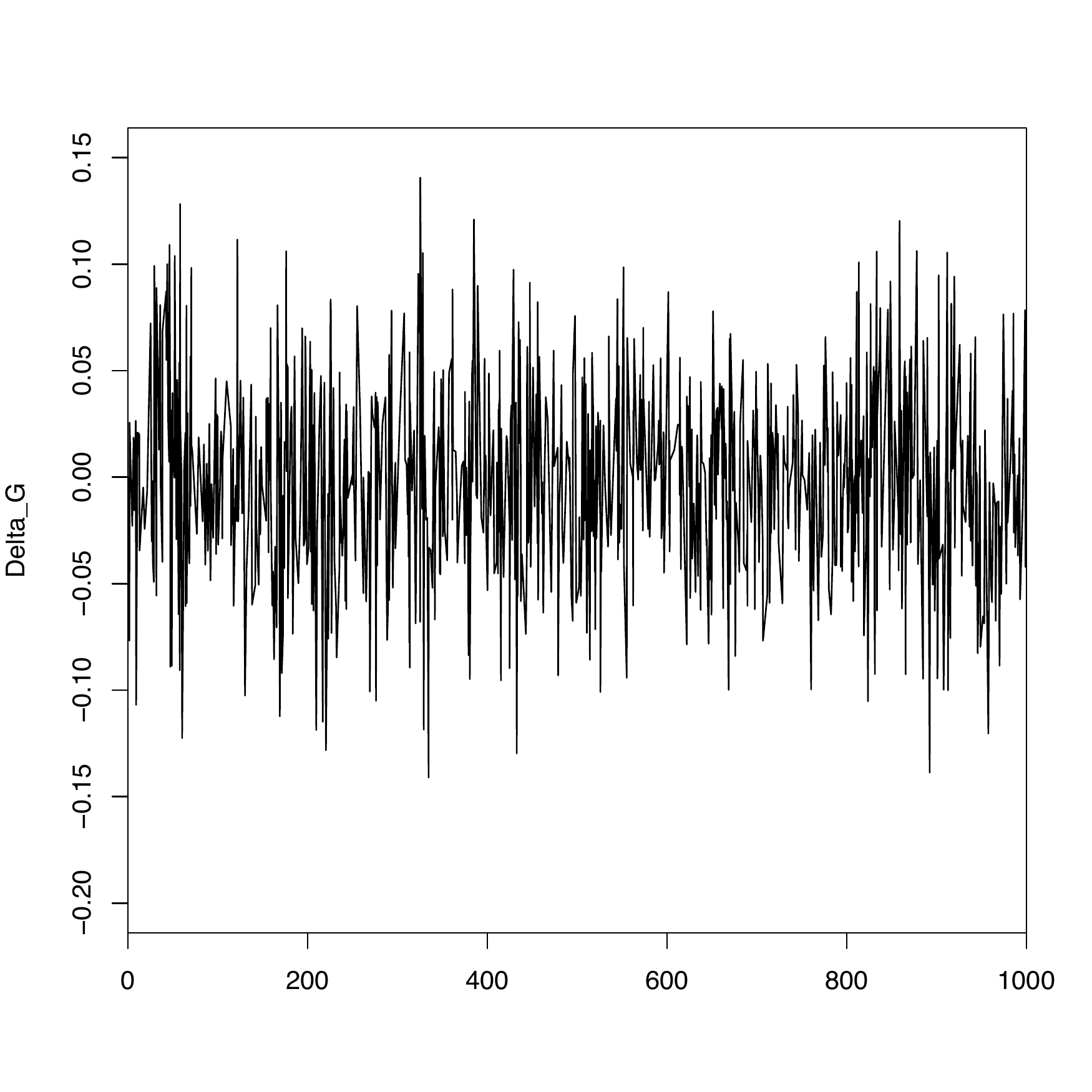} &
\includegraphics[width = 6cm]{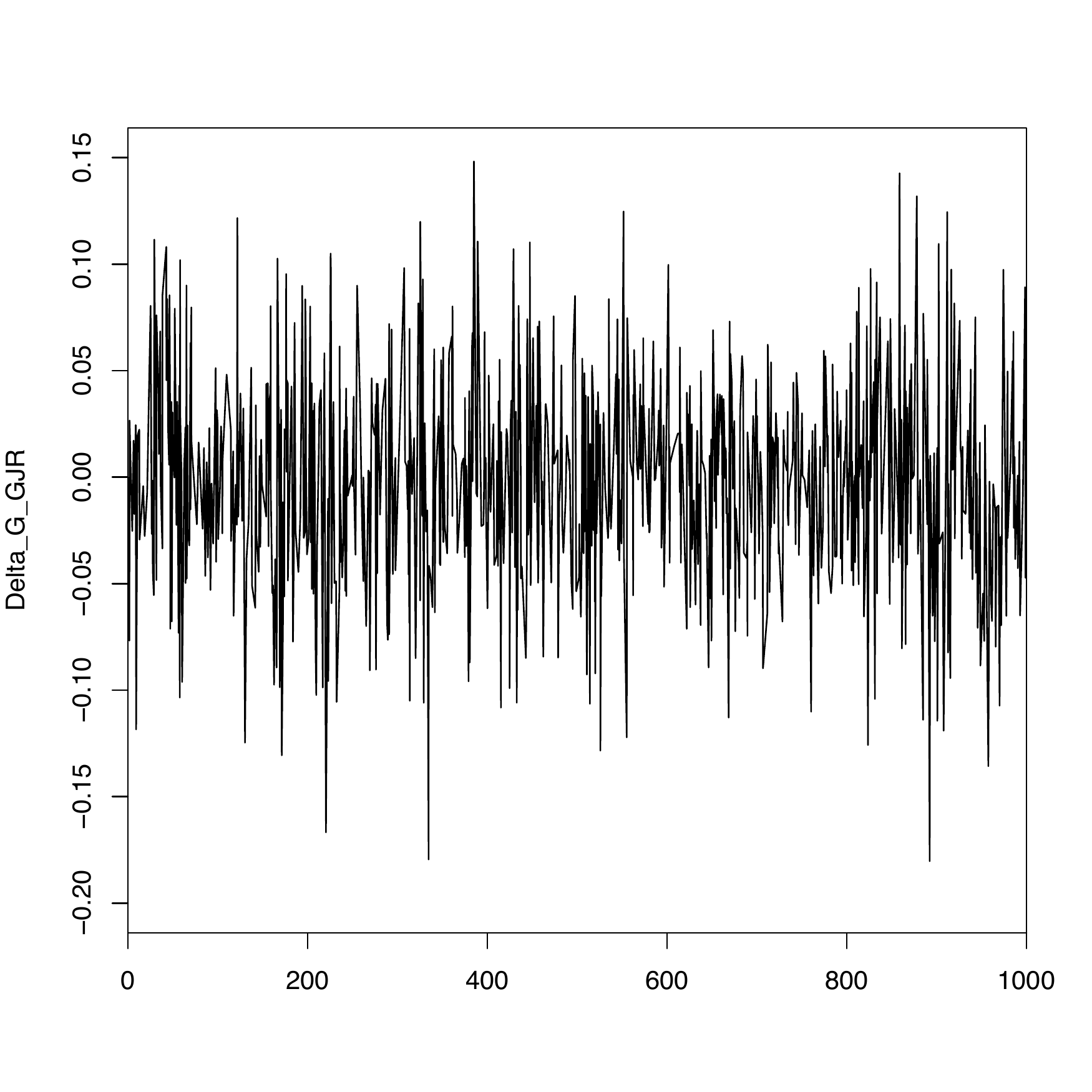}\\
\includegraphics[width = 6cm]{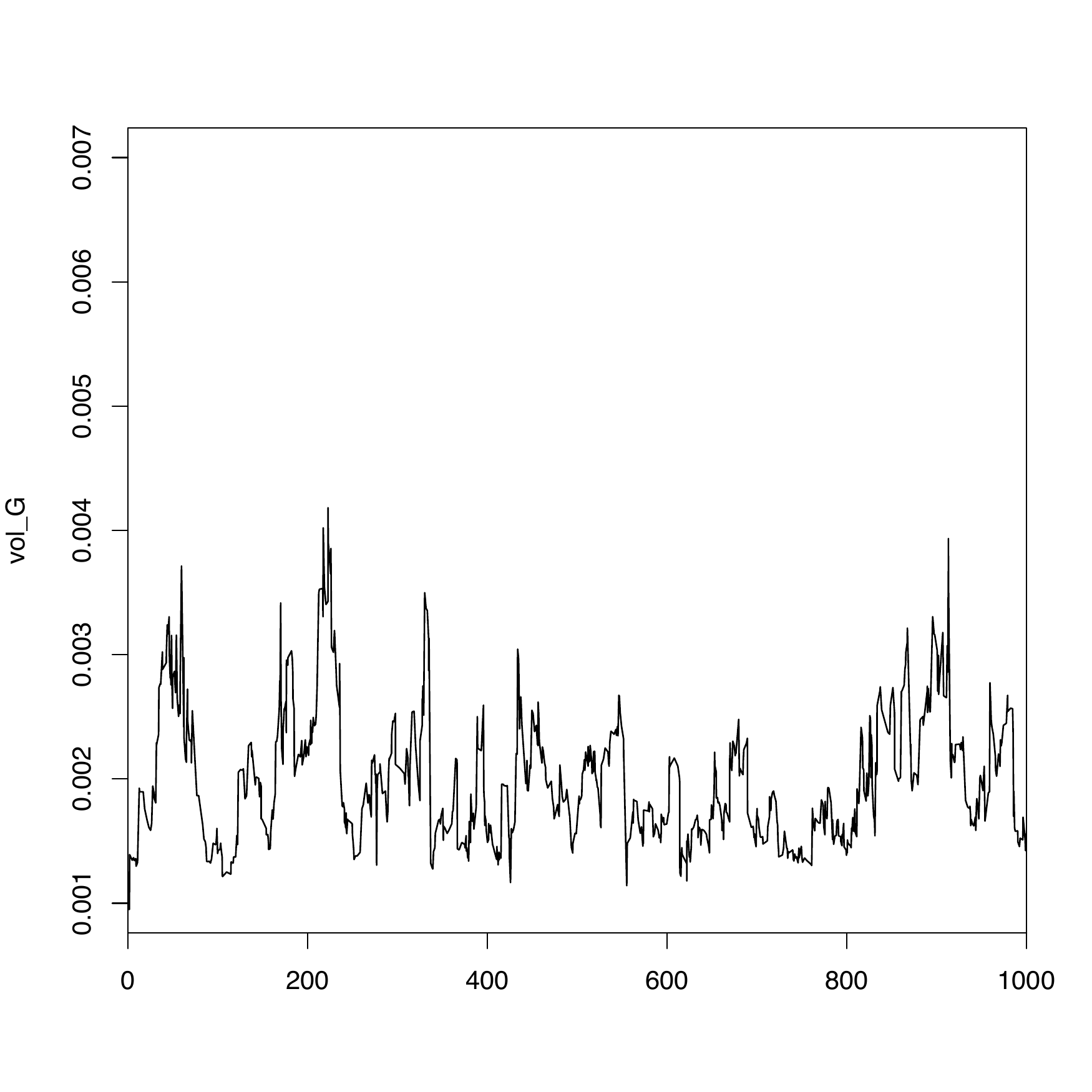} &
\includegraphics[width = 6cm]{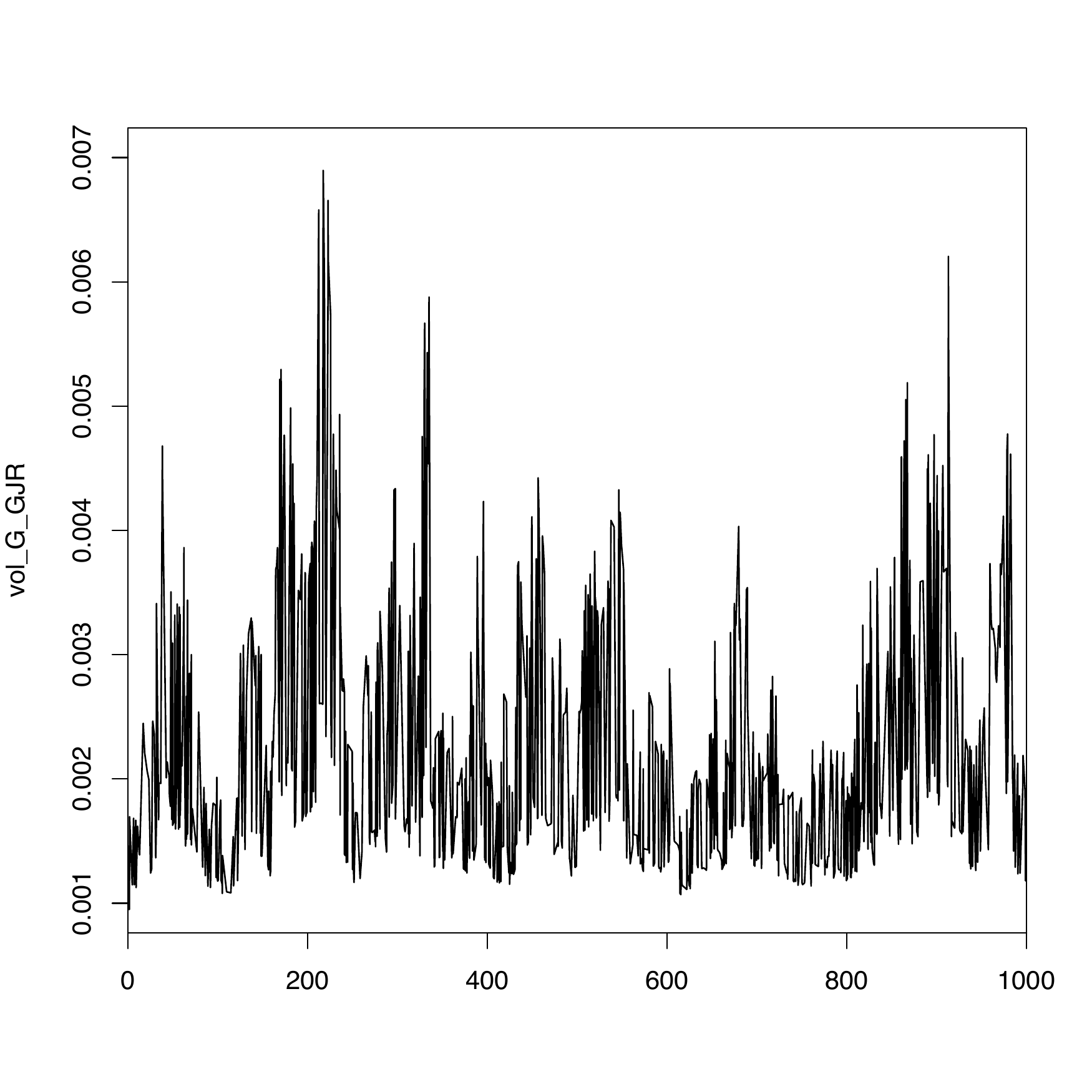}\\[-2mm]
\earr
$$
\end{center}
\caption{{\footnotesize Simulation of COGARCH (left column) vs. GJR-COGARCH (right column).
First row: price process (integrated process); second row: price differences for $\Delta=1$; third row: volatilities.
Parameters: $\theta=0.0001$, $\eta=-\log 0.9= 0.04576$, $\vp=1/18=0.05556$, $\ga=0.3$.}}
\label{fig:simulation}
\end{figure}

\section{Method of Moments for the GJR-COGARCH}\label{s3}

\subsection{Moments of the GJR-COGARCH}

In this subsection we present the theoretical second order structure of the returns of the integrated GJR-COGARCH and its squared process. These will be the basis of the method of moment estimation to be presented in Section~3.2.

In principle,  as remarked in \cite[Thm. 3.1 and Rem. 2]{lee10} - where no formulas were given - , the moments of the GJR-COGARCH can be computed analogously to those of the COGARCH as done in \cite{zappetal,KLM:2004}. 
Although the calculations are quite straightforward, they are tedious and lengthy. 
We restrict ourselves on presenting the explicit formulas in Propositions~\ref{sec:46} and \ref{prop-moments}; 
full proofs for all formulas apart from \eqref{eq:forthmoment} can be found in \cite{mayrmaster}. 

We start with the moments of the GJR-COGARCH volatility.

\begin{proposition}\label{sec:46}
Let $(\sigma_{t}^2)_{t\geq 0}$ be the stationary GJR-COGARCH volatility \eqref{eq:COGJR} with $\sigma_{0}^{2}\overset{d}= \sigma_\infty^2$ as  in \eqref{eq:coGJRstat}. 
Let $\kappa \in\NN$ be constant. Then $E\left[\sigma_{\infty}^{2\kappa}\right]<\infty$ if and only if $E\left[|L_{1}|^{2\kappa }\right]<\infty$ and $\Psi(\kappa)<0$. In particular, we have
\begin{align*}
E\left[\sigma_{t}^{2 \kappa}\right]&=\kappa !\theta^{\kappa}\prod_{l=1}^{\kappa}{\frac{1}{-\Psi(l)}}, \quad t\geq 0,\\
\cov\left(\sigma_{t}^{2},\sigma_{t+h}^{2}\right) &=\theta^{2}\left( \frac{2}{\Psi(1)\Psi(2)}-\frac{1}{\Psi(1)^2}\right)e^{h\Psi(1)},
\quad t,h\geq 0,
\end{align*}
with $\Psi(\cdot)$ as in \eqref{eq:142}.
\end{proposition}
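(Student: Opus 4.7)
The plan is to exploit the SDE $d\sigma_t^2 = \theta\, dt + \sigma_{t-}^2\, dU_t$ from the previous lemma, combined with the stationarity assumption $\sigma_0^2 \stackrel{d}{=} \sigma_\infty^2$. A key preliminary observation is that $E[U_1] = -\eta + \vp\int_{\bbr} h(y)\,\nu_L(dy) = \Psi(1)$: symmetry of $\nu_L$ cancels the $y|y|$ cross-term in $h(y) = (1+\ga^2)y^2 - 2\ga\, y|y|$, and what remains is precisely \eqref{eq:psi1}.

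For the moment formula, I would apply It\^o's change-of-variables formula for finite-variation jump semimartingales to $f(x)=x^\kappa$ along the SDE. Since $\Delta\sigma_s^2 = \vp h(\Delta L_s)\sigma_{s-}^2$, this gives
\begin{align*}
(\sigma_t^2)^\kappa - (\sigma_0^2)^\kappa &= \int_0^t \kappa(\sigma_{s-}^2)^{\kappa-1}(\theta-\eta\sigma_{s-}^2)\,ds + \sum_{0<s\leq t}(\sigma_{s-}^2)^\kappa \bigl((1+\vp h(\Delta L_s))^\kappa - 1\bigr).
\end{align*}
Taking expectations, compensating the jump sum by $\nu_L$, using that $E[(\sigma_t^2)^\kappa]$ is constant in $t$ by stationarity, and recognising $\Psi(\kappa)$ via \eqref{eq:142} collapses the identity to
\[
0 = \kappa\theta\, E[\sigma_\infty^{2(\kappa-1)}] + \Psi(\kappa)\, E[\sigma_\infty^{2\kappa}],
\]
which iterates from $E[\sigma_\infty^0]=1$ to the stated product. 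Positivity of $E[\sigma_\infty^{2\kappa}]$ then forces $\Psi(\kappa)<0$, and finiteness of $\Psi(\kappa)$ itself is equivalent to $E[|L_1|^{2\kappa}]<\infty$ through \eqref{eq:142}; conversely, under both conditions the recursion can be run inductively to produce finite moments, recovering the classical Carmona--Petit--Yor identity for exponential functionals of L\'evy processes.

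For the autocovariance, I would use that $(\sigma_t^2)_{t\geq 0}$ is Markov as the solution of an SDE driven by the L\'evy process $U$. Splitting $U_t = \Psi(1)\,t + M_t$ with $M$ a martingale and taking $E[\cdot \mid \calf_t]$ in the integrated SDE yields
\[
\frac{d}{dh}\,E[\sigma_{t+h}^2 \mid \calf_t] = \theta + \Psi(1)\, E[\sigma_{t+h}^2 \mid \calf_t],\qquad E[\sigma_t^2 \mid \calf_t] = \sigma_t^2,
\]
so that $E[\sigma_{t+h}^2 \mid \calf_t] = \sigma_t^2 e^{h\Psi(1)} + \theta(e^{h\Psi(1)}-1)/\Psi(1)$. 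Multiplying by $\sigma_t^2$, taking expectations, and substituting $E[\sigma_\infty^2] = -\theta/\Psi(1)$ and $E[\sigma_\infty^4] = 2\theta^2/(\Psi(1)\Psi(2))$ from the first part reduces algebraically to the claimed covariance formula.

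The main technical hurdle is justifying that the compensated-jump martingale arising in It\^o's formula has vanishing expectation, which requires an a priori integrability bound for $(\sigma_{s-}^2)^\kappa(1+\vp h(\Delta L_s))^\kappa$. The standard fix is a localisation by stopping times combined with induction on $\kappa$: the base case $\kappa=1$ follows from $\Psi(1)<0$ and a direct bound from the SDE, and each higher moment inherits integrability from the previous one through the recursion itself, with monotone convergence removing the localisation at the end.
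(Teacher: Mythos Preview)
Your proposal is correct and follows the standard route the paper points to: the paper omits the proof and refers to \cite{mayrmaster} and the analogous COGARCH computations in \cite{KLM:2004}, which proceed exactly as you outline---deriving the moment recursion from the SDE/exponential-functional structure (the Carmona--Petit--Yor identity) and obtaining the autocovariance from the conditional mean $E[\sigma_{t+h}^2\mid\calf_t]$. Your treatment of the integrability issue via localisation and induction on $\kappa$ is the appropriate technical justification.
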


The observations upon which our estimation will be based are the increments of the integrated GJR-COGARCH $(G_t)_{t\ge0}$. Hence we set for fixed $r>0$
\beam\label{Gincr}
G_t^{(r)}:= G_{t+r}-G_t=\int_{(t,t+r]}\sigma_{s-} dL_s, \quad t\geq 0.
\eeam
Obviously, this is a stationary process if the volatility $(\sigma^2_t)_{t\geq 0}$ is stationary.

\begin{proposition} \label{prop-moments}
Let $(L_{t})_{t\geq 0}$ be a pure-jump L\'evy process with $E[L_1]=0$ and $E[L_1^2]=1$. 
Assume that $\Psi(1)<0$ for $\Psi$ as in \eqref{eq:142}. 
Further let $\left(\sigma_{t}^{2}\right)_{t\geq 0}$ be the stationary GJR-COGARCH volatility \eqref{eq:COGJR} with $\sigma_{0}^{2}\overset{d}= \sigma_\infty^2$ as  in \eqref{eq:coGJRstat}.
Then for all $t\geq 0$ and $h\geq r>0$
\begin{align}
E[G_{t}^{(r)}] &= 0,\label{eq:688}\\
E[(G_{t}^{(r)})^{2}] &= \frac{\theta r}{|\Psi(1)|}E[L_{1}^{2}],\label{eq:689}\\
\cov(G_{t}^{(r)},G_{t+h}^{(r)}) &=0\label{eq:690}.
\end{align}
Assume further that $E\left[L_{1}^{4}\right]<\infty$ and $\Psi(2)<0$. Then $E[(G_t^{(r)})^4]<\infty$ and, if additionally $\int_\RR y^3 \nu_L(dy)=0$, we have for all $t\geq 0$ and $r>0$
\begin{align} \label{eq:forthmoment}
\lefteqn{ E[(G_t^{(r)})^4]}\\
&= 6 E[L_1^2] \frac{\theta^2}{|\Psi(1)|^2}\left(\frac{2\eta}{\vp} - (1+\gamma^2)  E[L_1^2] \right)\left(\frac{2}{|\Psi(2)|}-\frac{1}{|\Psi(1)|} \right)\left(r-\frac{1-e^{-r|\Psi(1)|}}{|\Psi(1)|} \right) \nonumber \\
&\quad + 2\frac{\theta^2}{\vp^2}\left(\frac{2}{|\Psi(2)|}-\frac{1}{|\Psi(1)|} \right) (1+6\gamma^2 +\gamma^4)^{-1} r + 3 \frac{\theta^2}{|\Psi(1)|^2}(E[L_1^2])^2 r^2, \nonumber
\end{align}
while for all $t\geq 0$ and $h\geq r>0$
\begin{align} \nonumber
 \cov((G_{t}^{(r)})^{2},(G_{t+h}^{(r)})^{2})
&= E[L_1^2] \frac{\theta^2}{|\Psi(1)|^3} \left( \frac{2\eta}{\vp}-(1+\gamma^2) E[L_1^2]\right) \left(\frac{2}{|\Psi(2)|}-\frac{1}{|\Psi(1)|} \right) \\
&\quad \times (1-e^{-r|\Psi(1)|})(e^{r|\Psi(1)|}-1) e^{-h|\Psi(1)|}>0. \label{eq:covsquared}
\end{align}
\end{proposition}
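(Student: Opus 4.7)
The approach mirrors the moment computations for the symmetric COGARCH in \cite{KLM:2004,zappetal}, with additional bookkeeping for the asymmetric jump term $h(\Delta L)=(|\Delta L|-\ga\Delta L)^2$. The three identities \eqref{eq:688}--\eqref{eq:690} follow quickly from $G_t^{(r)}=\int_t^{t+r}\si_{s-}\,dL_s$. Since $L$ is centered and square-integrable and $\si_{s-}$ is predictable with $\si^2$ stationary and $L^2$-integrable, $G_t^{(r)}$ is a centered $L^2$-martingale, giving \eqref{eq:688}. The It\^o isometry yields $E[(G_t^{(r)})^2]=E[L_1^2]\int_t^{t+r}E[\si_s^2]\,ds$, and by Proposition~\ref{sec:46} the integrand equals the constant $\theta/|\Psi(1)|$, giving \eqref{eq:689}. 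For \eqref{eq:690}, conditioning on $\calf_{t+h}$ and using that the $L$-increments on $(t+h,t+h+r]$ are independent of $\calf_{t+h}$ shows the cross term vanishes.

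For the fourth moment \eqref{eq:forthmoment}, set $M_u:=\int_t^{t+u}\si_{v-}\,dL_v$ and apply It\^o's formula to $x\mapsto x^4$ along $M$. Taking expectations annihilates the stochastic-integral term; compensating the three power sums $\sum(\Delta L)^k$, $k=2,3,4$, against $\nu_L$ and invoking the hypothesis $\int y^3\nu_L(dy)=0$ leaves
\[
 E[M_r^4]=6E[L_1^2]\int_0^r E[M_u^2\si_{t+u}^2]\,du+\Bigl(\int y^4\nu_L(dy)\Bigr)\int_0^r E[\si_{t+u}^4]\,du.
\]
The second integrand is constant by stationarity and is evaluated via Proposition~\ref{sec:46} together with \eqref{eq:psi2}, producing the middle term of \eqref{eq:forthmoment} with its factor $(1+6\ga^2+\ga^4)^{-1}$.

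The core step is the joint moment $u\mapsto g(u):=E[M_u^2\si_{t+u}^2]$. I would compute it by applying It\^o's product rule to $M_u^2\si_{t+u}^2$, using the SDE of the preceding lemma for $\si^2$ and $dM_u^2=2M_{u-}\si_{(t+u)-}dL_{t+u}+\si_{(t+u)-}^2\,d[L,L]_{t+u}$. Compensating the resulting jump terms against $\nu_L$, discarding cubic contributions by $\int y^3\nu_L(dy)=0$, and taking expectations produces a linear ODE of the form
\[
 g'(u)=\Psi(1)\,g(u)+c_1E[\si_\infty^4]+c_2E[\si_\infty^2],\qquad g(0)=0,
\]
with explicit constants $c_1,c_2$ depending on $\theta,\eta,\vp,\ga,E[L_1^2]$. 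Its solution is a constant plus a multiple of $e^{\Psi(1)u}$; integrating over $[0,r]$ generates exactly the factor $r-(1-e^{-r|\Psi(1)|})/|\Psi(1)|$ in \eqref{eq:forthmoment}, while the constant part produces the $r^2$-term $3\theta^2(E[L_1^2])^2r^2/|\Psi(1)|^2$. Rewriting $c_1,c_2$ via \eqref{eq:psi1}--\eqref{eq:psi2} puts the prefactors into the displayed form.

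For \eqref{eq:covsquared} I would condition on $\calf_{t+r}$. Since the $L$-increments on $(t+h,t+h+r]$ are independent of $\calf_{t+r}$ and the conditional map $s\mapsto E[\si_s^2\mid\calf_{t+r}]$ satisfies the linear ODE $\tfrac{d}{ds}E[\si_s^2\mid\calf_{t+r}]=\theta+\Psi(1)E[\si_s^2\mid\calf_{t+r}]$ with initial value $\si_{t+r}^2$, one obtains $E[(G_{t+h}^{(r)})^2\mid\calf_{t+r}]$ as an explicit affine function of $\si_{t+r}^2$. Subtracting means reduces the covariance to a deterministic factor multiplied by $\cov((G_t^{(r)})^2,\si_{t+r}^2)=g(r)-\theta^2rE[L_1^2]/|\Psi(1)|^2$, already available from the computation of $g$; the sign and exponential decay in $h$ then appear directly. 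The main obstacle throughout is algebraic bookkeeping: the jump $h(\Delta L)$ couples $\si^2$ and $L$ through integrals against $\nu_L$ of several polynomial forms in $y$, and gathering them into $\Psi(1)$, $\Psi(2)$ and the asymmetry factors $1+\ga^2$ and $1+6\ga^2+\ga^4$ is where the proof spends most of its length.
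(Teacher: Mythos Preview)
Your proposal is correct and follows essentially the same approach the paper indicates, namely the computations in \cite{KLM:2004,zappetal} for the symmetric case carried over with $(\Delta L)^2$ replaced by $h(\Delta L)$; the paper itself only sketches this and refers to \cite{mayrmaster} for details, so your outline is already more explicit than what is given there. The ODE you set up for $g(u)=E[M_u^2\sigma_{t+u}^2]$ is equivalent to the conditional-expectation route of \cite[Prop.~5.1]{KLM:2004}, and the reduction via \eqref{eq:psi1}--\eqref{eq:psi2} to the displayed prefactors is exactly the ``tedious and lengthy'' bookkeeping the paper alludes to.
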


\begin{remark} 
(1) \, Setting $\ga=0$, all moment expressions reduce to those of the symmetric COGARCH in \cite[Prop.~1]{zappetal}.\\
(2) \, The asymmetry modelled by $\gamma$ in \eqref {eq:forthmoment} is also present in $\Psi(1)$ and $\Psi(2)$.\\
(3) \, Under the conditions of Proposition \ref{prop-moments} and for fixed $r>0$ the integrated GJR-COGARCH  $((G^{(r)}_{ri})^2)_{i\in\NN}$ has the autocorrelation structure of an ARMA(1,1) process (see e.g. \cite[Exercise 3.16]{brockwelldavis}). 
For the COGARCH this was also remarked in \cite[Lemma 2.1]{zappetal}.
Due to this $E[(G_t^{(r)})^4]$ cannot be deduced from \eqref{eq:covsquared}, which only holds for $h\geq r$.
\end{remark}

\subsection{Method of Moments (MoM)}


We aim at estimation of the model parameters $(\theta,\eta,\vp,\gamma)$ from a sample of equally spaced returns over time intervals of length $\Delta$. 
For $i\in\NN$ we denote the stationary increment process of the integrated GJR-COGARCH (cf. \eqref{Gincr}) by 
\begin{equation}\label{eq-defGi} 
G_i:=G^{(\Delta)}_{i\Delta}=G_{(i+1)\Delta}- G_{i\Delta}.
\end{equation}

The following is the main result of this section and relates the moments of the observed increments of the integrated GJR-COGARCH to its parameters. 

\begin{theorem} \label{thm:momentestimator}
 Let $L$ be a pure-jump L\'evy process with finite fourth moment, $E[L_1]=0$, $E[L_1^2]=1$ and L\'evy measure $\nu_L$ such that $\int x^3\nu_L(dx)=0$ and $S:= \int x^4\nu_L(dx)$ is known. 
 Assume $\Psi(2)<0$. 
 Let the stationary increment process of the integrated GJR-COGARCH with parameters $\theta,\eta, \vp$ and $\gamma$ be defined by \eqref{eq-defGi}.
 Let $\mu$, $\Gamma$, $k$ and $p$ be positive constants such that
\begin{align*}
 E[G_i^2]&= \mu, \quad \var(G_i^2)=\Gamma \\
\mathrm{cor} (G_i^2,G_{i+h}^2)&= ke^{-\Delta h p},\quad h\in\NN.
\end{align*}
Set
\begin{align*}
 M_1 &:= \Gamma - \frac{6k\Gamma}{E} \left(p\Delta-1+e^{-\Delta p}\right) - 2\mu^2, \quad \quad M_2 :=1-\frac{\mu^2 S}{\Delta M_1},\quad \quad  M_3:=\frac{\Delta k \Gamma p^2 S}{M_1 E},
 \end{align*}
 where  $E:= (1-e^{-\Delta p}) (e^{\Delta p}-1)$.
Then $M_1, M_2, M_3>0$.  Further set
\begin{align} \label{gammatilde}
\tilde{\gamma}_{1,2}&:= \frac{-M_3-4pS}{2pS-M_2}\pm \frac{\sqrt{8pSM_2^2 M_3+32 p^2 S^2 M_2^2+2pS M_2 M_3^2-8pSM_2^3}}{M_2(2pS-M_2)}\in \RR.
\end{align}
For $i=1,2$ define additionally $H_{i}:=\tilde{\gamma}_{i}^2+4\tilde{\gamma}_{i} -4$, and
\begin{align*}
M_4^{i} &:= \frac{ p^2}{\tilde{\gamma}_{i}^2} + 2\frac{\Delta  k \Gamma p^3}{\tilde{\gamma}_{i} M_1 E H_{i}},
\end{align*}
and choose the unique $\tilde{\gamma}\in\{\tilde{\gamma}_i, i=1,2 \}$ such that $M_4^i>0$ and
\begin{align} \label{gammaeindeutig}
\sqrt{M_4^i} H_i S \tilde{\gamma}_i &= -M_2 \tilde{\gamma}_i^2 + M_3 \tilde{\gamma}_i+H_i Sp.
\end{align}
Then $\tilde{\gamma}\in[1,2)$ and the parameters $\theta, \eta, \vp$ and $\gamma$ are uniquely determined by
\begin{align*}
 \theta &= \frac{p\mu}{\Delta}, \quad \quad \quad\quad \quad \,\,\,
\vp =  -\frac{p}{\tilde{\gamma}}+\sqrt{\frac{ p^2}{\tilde{\gamma}^2} + 2\frac{\Delta  k \Gamma p^3}{\tilde{\gamma} M_1 E (\tilde{\gamma}^2+4\tilde{\gamma} -4)}}, \\
\gamma& = \sqrt{\tilde{\gamma}-1}, \quad \mbox{and} \quad
\eta = p+\vp \tilde{\gamma}.
\end{align*}
\end{theorem}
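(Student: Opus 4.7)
The plan is to invert the moment formulas of Proposition~\ref{prop-moments}, evaluated at $r=\Delta$ and $E[L_1^2]=1$, and to express the four parameters $(\theta,\eta,\vp,\gamma)$ in terms of the observables $\mu,\Gamma,k,p$ and the known constant $S$. The two easiest parameters come first: matching $\cov(G_i^2,G_{i+h}^2)=k\Gamma e^{-\Delta h p}$ against \eqref{eq:covsquared} identifies $p=|\Psi(1)|$, and \eqref{eq:689} combined with $E[G_i^2]=\mu$ yields $\theta=p\mu/\Delta$. Identity \eqref{eq:psi1} then reads $-p=-\eta+\vp\tilde\gamma$ with $\tilde\gamma:=1+\gamma^2$, so once $\vp$ and $\tilde\gamma$ have been determined, $\eta=p+\vp\tilde\gamma$ and $\gamma=\sqrt{\tilde\gamma-1}$ follow at once.

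The second step extracts two non-trivial equations for $\vp$ and $\tilde\gamma$. Writing $H:=\tilde\gamma^2+4\tilde\gamma-4$, identity \eqref{eq:psi2} rewrites as $|\Psi(2)|=2p-\vp^2 HS$. The key observation is that $M_1$ is exactly what remains of $\Gamma-2\mu^2$ once the leverage factor $(2p/\vp+\tilde\gamma)(2/|\Psi(2)|-1/p)$ appearing in the first summand of \eqref{eq:forthmoment} is eliminated by means of \eqref{eq:covsquared}, giving
\[
M_1=\frac{2\Delta\theta^2 S}{p|\Psi(2)|}.
\]
Substituting $\theta=p\mu/\Delta$ and $|\Psi(2)|=2p-\vp^2 HS$ rearranges to the first equation $\vp^2 HS=2pM_2$. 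Plugging this and $M_3=\Delta k\Gamma p^2 S/(M_1E)$ back into \eqref{eq:covsquared} turns the covariance identity into a quadratic in $\vp$,
\[
\tilde\gamma H\vp^2+2pH\vp=\frac{2\Delta k\Gamma p^3}{M_1E},
\]
whose positive root is precisely $\vp=-p/\tilde\gamma+\sqrt{M_4}$ as stated in the theorem. Eliminating $\vp^2$ between the two equations collapses them to the signed linear relation $HS\vp=M_3-\tilde\gamma M_2$; the positivity of $M_1,M_2,M_3$ is automatic from these derivations, since $\vp,H,S,p>0$ and \eqref{eq:covsquared} is strictly positive.

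Squaring the linear relation and reusing $\vp^2 HS=2pM_2$ produces $(M_3-\tilde\gamma M_2)^2=2pM_2 S(\tilde\gamma^2+4\tilde\gamma-4)$, a quadratic in $\tilde\gamma$ whose solutions, after routine algebraic expansion of the discriminant, are exactly the $\tilde\gamma_{1,2}$ of \eqref{gammatilde}. The main obstacle, and the step I expect to need the most care, is disambiguating between these two candidates: squaring may have introduced a spurious root, so the correct $\tilde\gamma$ is the one that also satisfies the signed version $HS\vp=M_3-\tilde\gamma M_2$ with $\vp=\sqrt{M_4^i}-p/\tilde\gamma_i$. Multiplying through by $\tilde\gamma_i$ and rearranging produces precisely the consistency condition \eqref{gammaeindeutig}, while the auxiliary requirement $M_4^i>0$ guarantees that the associated $\vp$ is real and positive. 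Finally, the range $\tilde\gamma\in[1,2)$ reflects the built-in restriction $\gamma\in[0,1)$ on the GJR-COGARCH and in turn ensures $H>0$, legitimising all the divisions performed above.
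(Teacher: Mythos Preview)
Your proposal is correct and follows the same overall strategy as the paper: identify $p=|\Psi(1)|$ and $\theta=p\mu/\Delta$ from \eqref{eq:689} and \eqref{eq:covsquared}, set $\tilde\gamma=1+\gamma^2$, use \eqref{eq:psi1}--\eqref{eq:psi2} to rewrite $|\Psi(2)|=2p-\vp^2HS$, obtain the closed form $M_1=2\Delta\theta^2 S/(p|\Psi(2)|)$, and then reduce the remaining two moment equations to a quadratic in $\tilde\gamma$ together with the signed selection rule \eqref{gammaeindeutig}.

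The one genuine difference is in the algebra of the middle step. The paper derives the quadratic \eqref{eq:vpquadrat} for $\vp$ from the $\Gamma$-equation (after inserting the $k$-equation), then \emph{restarts} from \eqref{eq:k2} to produce a second quadratic in $\vp$, substitutes $\vp=-p/\tilde\gamma+\sqrt{M_4}$ into that, and only after further simplification arrives at \eqref{gammaeindeutig}. You instead eliminate $\vp^2$ between $\vp^2HS=2pM_2$ and the $\vp$-quadratic to obtain directly the \emph{linear} relation $HS\vp=M_3-\tilde\gamma M_2$; squaring and reusing $\vp^2HS=2pM_2$ gives the $\tilde\gamma$-quadratic in one line, and the linear relation itself (with $\vp=\sqrt{M_4}-p/\tilde\gamma$ inserted) is exactly \eqref{gammaeindeutig}. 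This is a cleaner and more transparent route to the same destination; the paper's path obscures the fact that the two equations are equivalent to one quadratic and one linear constraint in $\vp$.

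One small point you gloss over: you assert $\tilde\gamma_{1,2}\in\RR$ without checking the discriminant. The paper handles this by showing $M_3\ge M_2$, which makes the first term under the square root in \eqref{gammatilde} dominate the last. In your framework the argument is even simpler: since the true $\tilde\gamma=1+\gamma^2$ is a real root of a real quadratic, the other root is automatically real. Either remark would complete the argument.
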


\begin{proof}
 It follows readily from Proposition \ref{prop-moments} that
\begin{align}
\mu &= \frac{\theta \Delta}{|\Psi(1)|} \nonumber \\
\Gamma&=  6 \frac{\theta^2}{|\Psi(1)|^2}\left(\frac{2\eta}{\vp} - (1+\gamma^2)  \right)\left(\frac{2}{|\Psi(2)|}-\frac{1}{|\Psi(1)|} \right)\left(\Delta-\frac{1-e^{-\Delta |\Psi(1)|}}{|\Psi(1)|} \right) \label{eq:Gamma1} \\
&\quad + 2\frac{\theta^2}{\vp^2}\left(\frac{2}{|\Psi(2)|}-\frac{1}{|\Psi(1)|} \right) (1+6\gamma^2 +\gamma^4)^{-1} \Delta + 2 \frac{\theta^2}{|\Psi(1)|^2} \Delta^2 \nonumber \\
&:= \theta^2 \tilde{\Gamma} \nonumber\\
p&=|\Psi(1)| \nonumber \\
k&= \frac{\tilde{\Gamma}^{-1}}{|\Psi(1)|^3} \left( \frac{2\eta}{\vp}-(1+\gamma^2) \right) \left(\frac{2}{|\Psi(2)|}-\frac{1}{|\Psi(1)|} \right)(1-e^{-\Delta |\Psi(1)|})(e^{\Delta|\Psi(1)|}-1)  \label{eq:k1}
\end{align}
from which we immediately obtain the stated formula for $\theta$. 
Further setting $\tilde{\gamma}:= 1+\gamma^2$ we obtain the formula for $\gamma$. Also from \eqref{eq:psi1} we observe that
 $p=|\Psi(1)|=\eta -\vp \tilde{\gamma}$ which yields the given formula for $\eta$,  while by \eqref{eq:psi2}
\begin{align}\label{SPsi}
|\Psi(2)| &=-\Psi(2)=-2\Psi(1)- \vp^2(1+6\gamma^2+\gamma^4)S = 2p- \vp^2(\tilde{\gamma}^2+4\tilde{\gamma} -4)S.
\end{align}
Replacing $\theta$, $\gamma$, $|\Psi(1)|$ and $|\Psi(2)|$ in \eqref{eq:Gamma1} and \eqref{eq:k1} we hence obtain
\begin{align}
\Gamma&=  6 \frac{\mu^2 }{\Delta^2 }\left(\frac{2\eta}{\vp} - \tilde{\gamma}  \right)\left(\frac{2}{2p- \vp^2(\tilde{\gamma}^2+4\tilde{\gamma} -4)S}-\frac{1}{p} \right)\left(\Delta-\frac{1-e^{-\Delta p}}{p} \right)  \nonumber \\
&\quad + 2\frac{p^2 \mu^2}{\Delta \vp^2}\left(\frac{2}{2p- \vp^2(\tilde{\gamma}^2+4\tilde{\gamma} -4)S}-\frac{1}{p} \right) (\tilde{\gamma}^2+4\tilde{\gamma} -4)^{-1}  + 2 \mu^2  \nonumber \\
k&= \frac{\tilde{\Gamma}^{-1}}{p^3} \left( \frac{2\eta}{\vp}-\tilde{\gamma} \right) \left(\frac{2}{2p- \vp^2(\tilde{\gamma}^2+4\tilde{\gamma} -4)S}-\frac{1}{p} \right)(1-e^{-\Delta p})(e^{\Delta p}-1). \label{eq:k2}
\end{align}
Inserting the second equation into the first yields
\begin{align*}
\Gamma&=  6k \Gamma \left(p\Delta-1+e^{-\Delta p}\right)E^{-1} + 2k \Gamma \frac{p^3\Delta }{ \vp^2} \left( \frac{2\eta}{\vp}-\tilde{\gamma} \right)^{-1} (\tilde{\gamma}^2+4\tilde{\gamma} -4)^{-1} E^{-1} + 2 \mu^2
\end{align*}
and, hence, replacing also $\eta$
\begin{align*}
M_1&:= \Gamma - \frac{6k\Gamma}{E}\left(p\Delta-1+e^{-\Delta p}\right) - 2\mu^2 =  2k\Gamma  \frac{p^3 \Delta }{ \vp} \left( 2p+\vp \tilde{\gamma} \right)^{-1} (\tilde{\gamma}^2+4\tilde{\gamma} -4)^{-1}  E^{-1}
\end{align*}
i.e.
\begin{align}
\vp^2\tilde{\gamma} M_1  +  2\vp p M_1  - 2k\Gamma p^3 \Delta  (\tilde{\gamma}^2+4\tilde{\gamma} -4)^{-1}  E^{-1}=0. \label{eq:vpquadrat}
\end{align}
Note that $M_1>0$ since inserting \eqref{eq:Gamma1} and \eqref{eq:k1} into the definition of $M_1$ and using \eqref{eq:psi2} we see
\begin{align} \label{M1positiv}
M_1&=  2\frac{p^2 \mu^2}{\Delta^2 \vp^2}\left(\frac{2}{|\Psi(2)|}-\frac{1}{p} \right) (1+6\gamma^2 +\gamma^4)^{-1} \Delta
= 2\frac{\theta^2 \Delta}{|\Psi(1)||\Psi(2)|}S>0.
\end{align}
Hence by \eqref{eq:vpquadrat} it follows that
\begin{align*}
 \vp&= -\frac{p}{\tilde{\gamma}}\pm \sqrt{\frac{ p^2}{\tilde{\gamma}^2} + 2\frac{\Delta  k \Gamma p^3}{\tilde{\gamma} M_1 E (\tilde{\gamma}^2+4\tilde{\gamma} -4) } } =: -\frac{p}{\tilde{\gamma}}\pm \sqrt{M_4}
\end{align*}
As $M_1$ and $E$ are positive and $\tilde{\gamma}\geq 1$ we see that also $M_4$ is positive and, in particular,
 $\sqrt{M_4}>\frac{p}{\tilde{\gamma}}$. Since by definition $\vp>0$ this yields the given formula for $\vp$ in terms of $p,\mu,k,\Gamma$ and $\tilde{\gamma}$.\\
It remains to determine $\tilde{\gamma}$. Therefore, we restart with \eqref{eq:k2} which  via simple but lengthy algebra  leads to
\begin{align*}
0&= \vp^2\left(H S k\Gamma  +\frac{\mu^2}{\Delta^2 p^2}  \tilde{\gamma} H S E \right)  + \vp \frac{2\mu^2}{\Delta^2 p} H S E  - 2pk\Gamma.
\end{align*}
with $H:=\tilde{\gamma}^2+4\tilde{\gamma} -4$.
Inserting the obtained expression for $\vp$ this gives 
\begin{equation*}
 0=-2pk\Gamma+2\frac{H SE\mu^2}{\Delta^2 p}\left(\sqrt{M_4}-\frac{p}{\tilde{\gamma}}\right)+\left(H Sk\Gamma+\frac{\tilde{\gamma} H SE\mu^2}{\Delta^2 p^2}\right)\left(\sqrt{M_4}-\frac{p}{\tilde{\gamma}}\right)^2,
\end{equation*}
an equation which already determines $\tilde{\gamma}$. Further, reordering, inserting the expression for $M_4$ and summarizing we observe that this is equivalent to
\begin{equation*}
\sqrt{M_4}\frac{HS}{\tilde{\ga}}= -M_2+HS\frac{p}{\tilde{\ga}^2} + M_3\frac1{\tilde{\ga}}
\end{equation*}
and hence to \eqref{gammaeindeutig}.
Taking squares on both sides and inserting the expression for $M_4$ now leads to a quadratic equation whose solutions are given by \eqref{gammatilde}.
Hereby positivity of $M_3$ is obvious while positivity of $M_2$ follows via \eqref{M1positiv} since
\begin{align*}
M_2&= 1-\frac{\mu^2 S}{\Delta M_1}
 = 1-\frac{\mu^2 |\Psi(1)||\Psi(2)|}{\Delta^2 2 \theta^2}
= 1-\frac{ |\Psi(2)|}{2|\Psi(1)|  }
= 1-\frac{2p- \vp^2 H S }{2p}
= \frac{ \vp^2 H S }{2p}>0.
\end{align*}
In particular, this yields together with \eqref{M1positiv} and \eqref{eq:k2}
\begin{align*}
 M_3&=\frac{\Delta k \Gamma p^2 S}{M_1 E}=\frac{k \Gamma p^3 |\Psi(2)|}{2 \theta^2 E} =\left(\frac{2\eta}{\vp}-\tilde{\gamma}\right)\left(1-\frac{|\Psi(2)|}{2p}\right) =\left(\frac{2 p}{\vp}+\tilde{\gamma}\right)M_2\geq M_2
\end{align*}
since $\tilde{\gamma}\geq 1$. This implies that the expression under the square bracket in \eqref{gammatilde} is positive, since the first term under the bracket has a larger absolute value than the fourth term. 
In particular, \eqref{gammatilde} leads to two real-valued solutions from which $\tilde{\gamma}$ can be determined via \eqref{gammaeindeutig}.  
\end{proof}

\begin{remark}\label{remmoments}
(1) \, For the symmetric COGARCH, where $\gamma=0$, 
 Theorem~\ref{thm:momentestimator} reduces to \cite[Theorem 1]{zappetal}.\\
(2) \, Since the GJR-COGARCH volatility $(\sigma^2_t)_{t\geq 0}$ is a generalized Ornstein-Uhlenbeck process, by \cite[Prop. 3.4]{fasen}, it is exponentially $\beta$-mixing. 
For strictly stationary $(\sigma^2_t)_{t\geq 0}$ this then implies that the return process $(G_t^{(\Delta)})_{t\geq 0}$ as defined in \eqref{Gincr} is ergodic. 
Thus, by Birkhoff's ergodic theorem, strong consistency of the empirical moments and autocorrelation function follows. 
As shown in Theorem~\ref{thm:momentestimator}, the parameter vector $(\theta,\eta, \vp, \gamma)$ is a continuous function of the first two moments of the GJR-COGARCH and of the parameters $p$ and $k$ of the autocorrelation function.
Consistency of the moments hence implies consistency of the estimates for $(\theta,\eta, \vp, \gamma)$ (cf. Remark~3.2, Theorem~3 and Corollary~1 in \cite{zappetal}).\\
(3) \, Prediction based estimation methods for the COGARCH, which involve even higher order moments, are presented in \cite{BN}.\\
(4) \, Finally, we want to discuss the choice of $S=\int x^4 \nu_L(dx)$.
In principle, there exist two possibilities: the first one assumes that the driving L\'evy process is known (as done in \cite{BN}, where a simple variance gamma process was taken), the second one mimicks pseudo maximum likelihood estimation (PMLE) and assumes normality of the increments (regardless of the true, but unknown driving process). 
In a simulation study of the symmetric COGARCH, performed in \cite{BN}, the MLEs based on the true variance gamma driving L\'evy process showed a visible bias. 
The same effect has been observed and analysed for discrete time heteroscedastic models in 
\cite[Section~6.2.2]{straumann}, and exemplified for a Laplace distributed noise in \cite[Fig.~6.2]{straumann}.
On the other hand, for discrete time heteroscedastic models PMLEs lead to consistent and asymptotically normal estimators; cf. \cite[Ch.~5]{straumann}.
Based on this insight for discrete time heteroscedastic models we vote for the second option and recommend a ``pseudo MoM'' setting $E[(L_1)^4]=3$ in \eqref{eq:forthmoment} corresponding to the value for the normal distribution.
\end{remark}

\section{Pseudo Maximum Likelihood Estimation of the GJR-COGARCH}\label{s4}

In \cite{mallermuellerszimayer}, the authors presented a  first jump approximation of the COGARCH(1,1) process. In \cite{stelzerfirstjump}, this approach is further generalized to solutions of L\'evy driven stochastic differential equations. The results in \cite{mallermuellerszimayer} allow to explicitly construct a sequence of GARCH(1,1) processes converging to the COGARCH(1,1) process in probability in the Skorokhod topology. 
The benefit of this approximation is three-fold. 
Firstly, we obtain an alternative to the method of moment estimation as we can perform pseudo maximum likelihood estimation (PMLE), secondly, it makes it possible to use GARCH software for the estimation of the COGARCH parameters, and thirdly, estimation can be based on tick-by-tick data observed on a non-equidistant grid.

\subsection{First jump approximation of the GJR-COGARCH}

Recall the Skorokhod $J_{1}$-distance on the space $\mathbb{D}^{d}[0,T]$ of $\RR^d$-valued, c\`adl\`ag functions, indexed by $[0,T]\subset \RR_+$ given by
\begin{equation}
\rho_d(U,V)=\inf_{\lambda\in\Lambda}{\left\{\sup_{0\leq t\leq T}{\|U_{t}-V_{\lambda(t)}\|+\sup_{0\leq t\leq T}{|\lambda(t)-t|}}\right\}},
\end{equation}
for two processes $U$ and $V$ in $\mathbb{D}^{d}[0,T]$, where $\Lambda$ is the set of all  increasing, continuous functions with $\lambda(0)=0$ and $\lambda(T)=T$.

Now let $0=t_{0}(n)<t_{1}(n)<\dots<t_{N_{n}}(n)=T$ be a sequence of partitions of the time intervall $[0,T]$ such that $\lim_{n\to\infty}{N_{n}}=\infty$ and $\Delta t(n):=\max_{i=1,\dotsc, N_{n}}{\Delta t_{i}(n)}\to 0$ as $n\to \infty$, where $\Delta t_{i}(n):=t_{i}(n)-t_{i-1}(n)$. For each $n\in\NN$ we define the discrete-time processes $(G_{i,n})_{i=1,\dotsc, N_{n}}$ and $(\sigma_{i,n}^{2})_{i=1,\dotsc, N_{n}}$ recursively via
\begin{align}\label{eq:GJRapprox}
G_{i,n}&=G_{i-1,n}+\sigma_{i-1,n}\sqrt{\Delta t_{i}(n)}\varepsilon_{i,n},\quad i=1,2,\dots,N_{n}, \\ 
\sigma_{i,n}^{2}&=\theta\Delta t_{i}(n) \label{eq:GJRapprox2}\\
& +\left(1+\left[(1-\gamma)^{2}\mathds{1}_{\{\varepsilon_{i-1,n}> 0\}}+(1+\gamma)^{2}\mathds{1}_{\{\varepsilon_{i-1,n}<0\}}\right]\varphi\Delta t_{i}(n)\varepsilon_{i-1,n}^{2}\right)e^{-\eta\Delta t_{i}(n)}\sigma_{i-1,n}^{2} \nonumber
\end{align}
with $G_{0,n}=G(0)=0$. 
The innovations sequences $(\varepsilon_{i,n})_{i=1,\ldots, N_n}$ for $n\in\NN$ 
are constructed via a first-jump approximation of the driving L\'evy process $L$.

Observe that by setting $Y_{i,n}:=G_{i,n}-G_{i-1,n}$ Eq.~\eqref{eq:GJRapprox2} is equivalent to
$$\sigma_{i,n}^{2}= \theta\Delta t_{i}(n)+e^{-\eta\Delta t_{i}(n)}\sigma_{i-1,n}^{2}+
\varphi e^{-\eta\Delta t_{i}(n)} (|Y_{i,n}|-\ga Y_{i,n})^2. $$
Hence \eqref{eq:GJRapprox} and \eqref{eq:GJRapprox2} describe a recursion of a GJR-GARCH process. 
In particular, for equidistant time steps a reparametrisation yields equivalence of \eqref{eq:GJRapprox} and \eqref{eq:GJRapprox2} to  \eqref{eq:GJR}. 

To construct the innovations $(\varepsilon_{i,n})_{i=1,\ldots, N_n}$ in \eqref{eq:GJRapprox} let $(m(n))_{n\in\NN}$ be a positive, decreasing sequence converging to $0$, which is bounded above by $1$. 
Assume that
$\lim_{n\to \infty} \Delta t_{i}(n) (\nu_L(\{ |x|\geq m(n) \}))^2=0$, and define for all $n\in\NN$
$$\tau_{i,n}:=\inf\{t:t_{i-1}(n)<t\leq t_i(n), |\Delta L_t|>m(n) \} \quad \mbox{for all } i=1,\ldots, N_n,$$
while $\tau_{i,n}:=+\infty$ if $L$ has no jump larger than $m(n)$ in the interval  $(t_{i-1}(n),t_{i}(n)]$. 
Then we define
\begin{equation}
\varepsilon_{i,n}=\frac{\mathds{1}_{\{\tau_{i,n}<\infty\}}\Delta L_{\tau_{i,n}}-\mu_{i}(n)}{\xi_{i}(n)}, \quad i=1,2,\dots,N_{n},
\end{equation}
where $\mu_{i}(n)$ and $\xi_{i}^2(n)$ denote the (finite) expectation and variance of the i.i.d. random variables $(\mathds{1}_{\{\tau_{i,n}<\infty\}}\Delta L_{\tau_{i,n}})_{i=1,\dots,N_{n}}$.

The discrete time processes $\sigma_{.,n}^{2}$ and $G_{.,n}$ as in \eqref{eq:GJRapprox} can then be embedded in a continuous time setting by taking $(\sigma_{n}^{2}(t))_{t\geq 0}$ and $(G_{n}(t))_{t\geq 0}$ as
\begin{equation}
\sigma_{n}^{2}(t):=\sigma_{i,n}^{2}\quad \text{and}\quad G_{n}(t):=G_{i,n} , \quad \mbox{ for all }t\in[t_{i-1}(n),t_{i}(n)), \ 0\leq t\leq T,\label{eq:213}
\end{equation}
with $G_{n}(0)=0$.

Now we can formulate the main result of this section.

\begin{theorem}\label{sec:200}
Define the bivariate processes $(\sigma^{2},G)$ as in \eqref{eq:defCOGJR} and \eqref{eq:defCOGJRint} and $(\sigma_{n}^{2},G_{n})_{n\in\NN}$ by \eqref{eq:213}. 
Then
\begin{equation}
\lim_{n\to\infty} \rho_2 \left((\sigma_{n}^{2},G_{n}),(\sigma^{2},G)\right)= 0\, \mbox{ in probability}.
\end{equation}
\end{theorem}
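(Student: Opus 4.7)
My plan is to follow the two-stage approximation strategy of Maller, M\"uller and Szimayer~\cite{mallermuellerszimayer}, who established the analogous result for the symmetric COGARCH, and to check that the GJR modification requires only routine additional estimates. For each $n$ I would introduce the compound Poisson process $L^{(n)}$ having atoms $\mathds{1}_{\{\tau_{i,n}<\infty\}}\Delta L_{\tau_{i,n}}$ at the jump times $\tau_{i,n}$, and define auxiliary continuous-time processes $(\bar\sigma_n^2,\bar G_n)$ as the exact GJR-COGARCH \eqref{eq:defCOGJR}--\eqref{eq:defCOGJRint} driven by $L^{(n)}$ instead of $L$. The triangle inequality
\begin{equation*}
\rho_2\bigl((\sigma_n^2,G_n),(\sigma^2,G)\bigr)\le \rho_2\bigl((\sigma_n^2,G_n),(\bar\sigma_n^2,\bar G_n)\bigr)+\rho_2\bigl((\bar\sigma_n^2,\bar G_n),(\sigma^2,G)\bigr)
\end{equation*}
then reduces the problem to showing that each of the two summands tends to zero in probability.

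For the second summand I would first observe that $L^{(n)}\to L$ uniformly on $[0,T]$ in probability: the discarded part $L-L^{(n)}$ is (up to the negligible event of two large jumps in a cell) the compensated sum of small jumps of $L$, hence a martingale with $L^2$-norm bounded by $T\int_{|x|<m(n)}x^2\nu_L(dx)\to 0$, and Doob's inequality upgrades this to uniform convergence in probability. Because $h$ is continuous with $|\log(1+\vp h(x))|\le C\min(x^2,|x|)$, the spectrally negative process $X^{(n)}_t=\eta t-\sum_{0<s\le t}\log(1+\vp h(\Delta L^{(n)}_s))$ inherits Skorokhod convergence $X^{(n)}\to X$. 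The generalized Ornstein--Uhlenbeck representation \eqref{eq:COGJR} is a continuous functional of $X$, so $\bar\sigma_n^2\to\sigma^2$; and $\bar G_n=\int_0^\cdot\bar\sigma_{n,s-}\,dL^{(n)}_s\to\int_0^\cdot\sigma_{s-}\,dL_s=G$ follows by a Kurtz--Protter-type stability theorem for stochastic integrals, exactly as in \cite{mallermuellerszimayer}.

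For the first summand, between consecutive grid points $\bar\sigma_n^2$ satisfies $d\bar\sigma_n^2/dt=\theta-\eta\bar\sigma_n^2$ except possibly at the single jump time $\tau_{i,n}$, where it is multiplied by $1+\vp h(\Delta L_{\tau_{i,n}})$. A first-order Taylor expansion of this flow matches the recursion \eqref{eq:GJRapprox2} up to a remainder of order $\Delta t_i(n)^2$. The crucial check is that the asymmetric bracket in \eqref{eq:GJRapprox2} equals
\begin{equation*}
1+\vp\,h\!\bigl(\sqrt{\Delta t_i(n)}\,\eps_{i-1,n}\bigr),
\end{equation*}
because $h(x)=(1-\ga)^2 x^2\mathds{1}_{\{x>0\}}+(1+\ga)^2 x^2\mathds{1}_{\{x<0\}}$, and the symmetry of $L$ forces $\mu_i(n)=0$, so $\eps_{i-1,n}$ carries the sign of $\Delta L_{\tau_{i-1,n}}$ on $\{\tau_{i-1,n}<\infty\}$. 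The hypothesis $\lim_{n\to\infty}\Delta t_i(n)(\nu_L(\{|x|\ge m(n)\}))^2=0$ further yields $\xi_i(n)^2=\Delta t_i(n)(1+o(1))$ uniformly in $i$, whence $\sqrt{\Delta t_i(n)}\,\eps_{i-1,n}=\Delta L_{\tau_{i-1,n}}(1+o(1))$ on the jump event, and the discrete multiplicative factor agrees with $1+\vp h(\Delta L_{\tau_{i-1,n}})$ up to a vanishing error. A Gronwall-style iteration along the grid, exploiting in-probability boundedness of $\bar\sigma_n^2$ on $[0,T]$, then yields uniform smallness of the grid-level discrepancy.

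The main obstacle is converting these pointwise grid-level bounds into proximity in the Skorokhod topology, because $(\sigma_n^2,G_n)$ is piecewise constant with jumps at the deterministic grid points $t_i(n)$, whereas $(\bar\sigma_n^2,\bar G_n)$ is piecewise smooth with jumps inside the cells at the random times $\tau_{i,n}$. Following \cite{mallermuellerszimayer}, the remedy is to construct an explicit time change $\lambda_n\in\Lambda$ that maps $t_i(n)\mapsto\tau_{i,n}$ (with linear interpolation otherwise); since $|t_i(n)-\tau_{i,n}|\le\Delta t_i(n)\to 0$ uniformly, $\sup_t|\lambda_n(t)-t|\to 0$, and the matching of jump heights above controls $\sup_t\|(\sigma_n^2,G_n)_t-(\bar\sigma_n^2,\bar G_n)_{\lambda_n(t)}\|$. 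Verifying uniform smallness of this supremum along the random grid, while simultaneously controlling the multiplicative propagation of the error through the recursion on the random number of jump cells in $[0,T]$, is the delicate technical step that I would need to execute in full detail.
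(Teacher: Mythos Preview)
Your proposal is correct and follows precisely the route indicated by the paper: the paper's proof simply states that one should carry out the argument of \cite[Thm.~2.1]{mallermuellerszimayer} with $(\Delta L_{\tau_{i,n}})^2$ replaced by $h(\Delta L_{\tau_{i,n}})$, deferring all details to \cite{mayrmaster}. Your two-stage decomposition via the intermediate process $(\bar\sigma_n^2,\bar G_n)$, the identification of the asymmetric bracket in \eqref{eq:GJRapprox2} with $1+\vp\,h(\sqrt{\Delta t_i(n)}\,\eps_{i-1,n})$, and the explicit time change $\lambda_n$ matching grid points to jump times are exactly the ingredients of that program, so you have in fact supplied more detail than the paper itself does.
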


\begin{proof}
The long and technical proof of Theorem \ref{sec:200} can be carried out along the lines of the proof of \cite[Thm. 2.1]{mallermuellerszimayer}, replacing $(\Delta L_{\tau_{i,n}})^2$ by
$h(\Delta L_{\tau_{i,n}})$ with $h$ as in \eqref{eq:defCOGJR}; for details see \cite{mayrmaster}.
\end{proof}

\subsection{Pseudo maximum likelihood estimation (PMLE)}

In this section we extend the pseudo maximum likelihood estimation (PMLE) method for the COGARCH from Maller et al. \cite{mallermuellerszimayer}. 
As in the MoM, we aim at estimation of the model parameters $(\theta,\eta,\vp,\gamma)$ where, other than for the MoM, we allow for unequally spaced returns as observations.  
The basic idea is to replace the unknown likelihood by a corresponding normal likelihood; in our case we will assume that the increments of the integrated GJR-COGARCH are normally distributed. 

Assume we are given observations $G_{t_{i}}$ of the integrated GJR-COGARCH as in \eqref{eq:defCOGJRint} and \eqref{eq:defCOGJR} at fixed (non-random) times $0=t_{0}<t_{1}<\cdots <t_{N}=T$.
We denote the observed returns by $Y_{i}:=G_{t_{i}}-G_{t_{i-1}}$ and the time-steps by $\Delta t_{i}:=t_{i}-t_{i-1}$ for $i=1,\ldots,N$.
We also assume to be in the stationary regime.
Then
\begin{equation}
Y_{i}=\int_{t_{i-1}}^{t_{i}}\sigma_{s-}dL_s,
\end{equation}
for a L\'evy process $L$ with $E[L_1]=0$ and $E[L^{2}_1]=1$.
Denote by $\mathcal{F}_{t_{i-1}}$ the sigma algebra generated by $\{Y_{k}: k\le i-1\}$, then
the returns $Y_{i}$ are conditionally independent of $Y_{i-1},Y_{i-2},\dotsc$, given $\mathcal{F}_{t_{i-1}}$, since $(\sigma_{t}^{2})_{t\geq 0}$ is a Markov process (e.g. \cite[Lemma 3.3]{behmelindnermaller}).
In particular, by independence of the L\'evy increments, we have
\begin{equation*}
E[Y_{i}|\mathcal{F}_{t_{i-1}}]=E[Y_{i}]=E[G_{t_{i}}-G_{t_{i-1}}]=0,
\end{equation*}
while one deduces similarly as in the proof of Eq. (5.4) in \cite{KLM:2004},
\begin{align}\label{eq:1101}
\rho_{i}^{2}:= E[Y_{i}^{2}|\mathcal{F}_{t_{i-1}}]
&= E[L_1^2]  \left((\sigma^{2}_{t_{i-1}}-E[\sigma^{2}_0])
\frac{e^{-\Delta t_{i}\Psi(1)}-1}{-\Psi(1)}+E[\sigma^2_0]\Delta t_{i}\right),
\end{align}
with $\Psi(1)$ as in \eqref{eq:psi1}. 
Moreover, due to the stationarity assumption and Proposition~\ref{sec:46} we set
$$E[\sigma^{2}_0]=\frac{\theta}{-\Psi(1)}=\frac{\theta}{\eta-\varphi(1+\gamma^{2})}.$$
Inserting this in \eqref{eq:1101} yields
\begin{align}\label{eq:1515}
\rho_{i}^{2}
&=\left(\sigma^{2}_{t_{i-1}}-\frac{\theta}{\eta-\varphi(1+\gamma^{2})}\right)
\frac{e^{\Delta t_{i}(\eta-\varphi(1+\gamma^{2}))}-1}{\eta-\varphi(1+\gamma^{2})}
+\frac{\theta}{\eta-\varphi(1+\gamma^{2})}\Delta t_{i}.
\end{align}

To apply PMLE we assume that the returns $Y_i$ are conditionally normal distributed with expectation $0$ and variance $\rho_{i}^{2}$ given in \eqref{eq:1515}. The occuring sequence $(\sigma_{t_i}^2)_{i=1,\ldots, N_n}$ can be iterated starting from  $\si_0:=E[\si_0] = \theta/(\eta-\vp(1+\ga^2))$ and using the observations $Y_0,\ldots,Y_N$ via the first jump approximation model \eqref{eq:GJRapprox} and \eqref{eq:GJRapprox2}, i.e. 
\begin{align}
Y_i &=  \sigma_{i-1}\sqrt{\Delta t_{i}}\varepsilon_{i-1}\\
\sigma_{t_i}^{2}
&=\theta\Delta t_{i}+ e^{-\eta\Delta t_{i}}\sigma_{t_{i-1}}^{2}+
\varphi e^{-\eta\Delta t_{i}} (|Y_{i-1}|-\ga Y_{i-1})^2 \label{eq:1516}
\end{align}
which is of the form \eqref{eq:GJR} 
with parameters $\theta\Delta t_{i}$,
$\al= \varphi e^{-\eta\Delta t_{i}}$ and $\beta =e^{-\eta\Delta t_{i}}$.

Then we obtain as PML function
\begin{align}
\mathcal{L}_{N}=\mathcal{L}_{N}(\theta,\varphi,\eta,\gamma)
&=\log\left(\prod_{i=1}^{N}\frac{1}{\rho_{i}\sqrt{2\pi}}e^{-\frac{1}{2}\left(\frac{Y_{i}}{\rho_{i}}\right)^{2}}\right)\nonumber\\
&=-\frac{1}{2}\sum_{i=1}^{N}\log(\rho_{i}^{2})-\frac{N}{2}\log(2\pi)-\frac{1}{2}\sum_{i=1}^{N} \frac{Y_{i}^{2}}{\rho_{i}^{2}}.\label{eq:1600}
\end{align}
 Now one can use standard algorithms to obtain the pseudo maximum likelihood estimators as
$${\rm argmin}_{\theta,\varphi,\eta,\gamma} \sum_{i=1}^{N}\left(\frac{Y_{i}^{2}}{\rho_{i}^{2}} - \log(\rho_{i}^{2})\right),$$
or special algorithms designed for GARCH and GJR GARCH models (cf. \cite{bollerslev,dingetal,PenzerWangYao} or \cite[Chapter 5]{straumann}).

\brem
Consistency and asymptotic normality under certain regularity conditions have been proved
for the GARCH and the asymmetric GARCH model in \cite{MS}, and for the symmetric COGARCH in \cite{KL}.
Consequently, such results are also expected to hold for the GJR-COGARCH.
\erem

\section{Conclusion}

We extend the COGARCH(1,1) model to an asymmetric GRJ-COGARCH(1,1), which allows us to capture the observed asymmetry in financial data. Under stationarity conditions we calculate up to four moments and the covariance function of the squared returns of the integrated process.  Matching the analytical and empirical moments for the GJR-COGARCH is by no means standard and involves complex calculations as they are given in Section \ref{s3}.2. This method needs equidistant data.

We also derive the first jump approximation of the GRJ-COGARCH and prove convergence in probability in the Skorohod topology.
The PMLE based on normality of the returns is derived in Section \ref{s4}.2 and is the basis for the use of algorithms developed for the discrete time GJR-GARCH. This method has the advantage also to apply to irregularly spaced data.

\section{Acknowledgements}
We thank Stephan Haug for programming support and Thorsten Kud for the simulations depicted in Figure \ref{fig:simulation}.

\end{document}